\documentclass[11pt,a4paper]{article}

\usepackage{palatino}
\usepackage{graphicx}
\usepackage{faktor}
\usepackage{amsmath}
\usepackage{amsfonts}
\usepackage{amsthm}
\usepackage{mathrsfs}
\usepackage{latexsym}
\usepackage{amssymb}
\usepackage{amscd}
\usepackage{tikz-cd}
\usepackage[all]{xy}
\usepackage{authblk}
\usepackage{hyperref}
\usepackage{enumerate}
\usepackage{multicol}

\usepackage[scale=0.75]{geometry}

\usepackage{stackrel}
\usepackage{tensor}  
\usepackage[fleqn,tbtags]{mathtools,nccmath} 

\numberwithin{equation}{section}

\newtheorem{defn}[equation]{Def{i}nition}
\newtheorem{prop}[equation]{Proposition}
\newtheorem{lemma}[equation]{Lemma}
\newtheorem{theorem}[equation]{Theorem}

\theoremstyle{remark}
\newtheorem{remark}[equation]{Remark}

\theoremstyle{definition}
\newtheorem{example}[equation]{Example}

\DeclareSymbolFont{AMSb}{U}{msb}{m}{n}
\DeclareMathSymbol{\N}{\mathbin}{AMSb}{"4E}
\DeclareMathSymbol{\Z}{\mathbin}{AMSb}{"5A}
\DeclareMathSymbol{\R}{\mathbin}{AMSb}{"52}
\DeclareMathSymbol{\Q}{\mathbin}{AMSb}{"51}
\DeclareMathSymbol{\I}{\mathbin}{AMSb}{"49}
\DeclareMathSymbol{\C}{\mathbin}{AMSb}{"43}

 					%Hom

 					%End

\newcommand{\tn}[1]{\textnormal{#1}}
 					%Category

  %Counit of adjunction
  %Unit of adjunction

% Keywords command
\providecommand{\keywords}[1]
{
  \small	
  \textbf{\textit{Keywords:}} #1
}

\title{Quotient bifinite extensions and the finitistic dimension conjecture}

\author[a]{John William MacQuarrie}
\author[b]{Fernando dos Reis Naves}
\affil[a]{Universidade Federal de Minas Gerais, Belo Horizonte, MG, Brazil}

\begin{document}

\footnotetext{\textit{Email addresses:} john@mat.ufmg.br (John MacQuarrie), 
fernando.r.naves@gmail.com (Fernando dos Reis Naves)}

%\footnotetext{\textit{Email addresses:} email addresses redacted in accordance with AMS double blind policy }

\maketitle

\begin{abstract}
We prove that if $B\subseteq A$ is an extension of finite dimensional algebras such that the projective dimension of $A/B$ as a $B$-bimodule is finite, if $A$ has finite finitistic dimension, then so does $B$.  We exhibit examples demonstrating that the algebra $B$ appearing in such an extension can be more complicated than $A$.
%The finitistic dimension conjecture asserts that any finite dimension algebra over a field should have finite finitistic dimension. The main result in this paper is: Let $B$ be a subalgebra of an Artin algebra $A$ such that $\mathrm{pd}(_BA/B_B) < \infty$. Then $\mathrm{fin.dim}B< \infty$ when $\mathrm{fin.dim}A< \infty$.
\end{abstract}

\keywords{Finitistic dimension conjecture, extensions of algebras, finite dimensional associative algebras.}

\section{Introduction}

Let $A$ be a finite dimensional associative algebra over a field. The (small) finitistic dimension $\mathrm{fin.dim}A$ of $A$ is defined to be the supremum of the projective dimensions of the finitely generated left $A$-modules having finite projective dimension. The famous finitistic dimension conjecture (for finite dimensional algebras over a field) asserts that the finitistic dimension of an arbitrary finite dimensional algebra is finite. 
%The finitistic dimension conjecture is part of a larger family of homological conjectures: the strong Nakayama conjecture, the generalized Nakayama conjecture, the Nakayama conjecture, the Wakamatsu tilting conjecture and the Gorenstein symmetry conjecture, the truth of all of which would be implied by the truth of the finitistic dimension conjecture \cite{auslander1975},\cite{yagamata1996}. 
The conjecture is over 62 years old and remains wide open, being proved only for special classes of algebra -- see for instance \cite{green91,@igusa2015,Wang1994} for some cases where the conjecture is known to hold (there are many others). 
% Some cases where the conjecture is known to hold are:
% \begin{itemize}
%     \item monomial algebras \cite{green91};
%     \item algebras with representation dimension less than or equal to 3 \cite{@igusa2015};
%     \item algebras $A$ such that $A/J^i(A)$ is of finite representation type and $J^{2i+1}(A) = 0$ \cite{Wang1994};
%     \item algebras $A$ such that $J^3(A) =0$ \cite{green1991}.
% \end{itemize}

An \emph{extension} of finite dimensional algebras is simply a finite dimensional algebra $A$ with unital subalgebra $B$, denoted $B\subseteq A$.  A fruitful approach to the finitistic dimension conjecture, pioneered by Xi and coauthors, %(\cite{xi2004,xi2006,xi2008,xi2013,xi2017}), 
is to compare the finitude of the finitistic dimensions of $A$ and $B$ when the extension is assumed to have certain properties.  For instance, suppose that the radical of $B$ is a left ideal of $A$.  If either the representation dimension of $A$ is at most three \cite[Theorem 4.2]{xi2006}, or $A$ has finite projective dimension as a right $B$-module and $\mathrm{fin.dim}A$ is finite \cite[Corollary 1.4]{xi2013}, then $\mathrm{fin.dim}B$ is finite.

%$A$ has finite projective dimension 
%in \cite[Corollary 1.4]{xi2013} it is shown that if the radical of $B$ is a left ideal of $A$ and $A$ has finite projective dimension as a right $B$-module, then $B$ has finite finitistic dimension when $A$ does.

%An \emph{extension} of Artin algebras is simply an Artin algebra $A$ with unital subalgebra $B$, denoted $B\subseteq A$.  A fruitful approach to the finitistic dimension conjecture, pioneered by Xi and coauthors (\cite{xi2004,xi2006,xi2008,xi2013,xi2017}), is to compare the finitude of the finitistic dimensions of $A$ and $B$ when the extension is assumed to have certain properties.  For instance, in \cite[Corollary 1.4]{xi2013} it is shown that if the radical of $B$ is a left ideal of $A$ and $A$ has finite projective dimension as a right $B$-module, then $B$ has finite finitistic dimension when $A$ does.   %for instance, if the extension is left bounded, then Han's conjecture holds for $A$ if, and only if, it holds for $B$ \cite[Theorem 4.6]{Cibils_2022} (see the reference for definitions).
%the preservation of the validity of Han's conjecture has been considered in  \cite{Cibils_2020, cibils_2021, Cibils_2022, iusenko2021} and others.  
%Preservation of the finitude of the finitistic dimension has been considered in \cite{xi2004,xi2006,xi2008,xi2013,xi2017, iusenko2021} and others.  

Of particular relevance to the current discussion is \cite[Theorem 6.14]{iusenko2021}: if $B\subseteq A$ is an extension of algebras such that $i)$ $A/B$ has finite projective dimension as a $B$-bimodule; $ii)$ $A/B$ is projective as either a left or a right module; $iii)$ some tensor power (over $B$) of $A/B$ is projective as a $B$-bimodule and $iv)$ the $B$-relative projective dimension of the bimodule $A$ is finite, then $A$ has finite finitistic dimension if, and only if, $B$ does.  We show here that in order to prove the ``downwards direction'', these hypotheses can be weakened considerably.  Say that the extension $B\subseteq A$ is \emph{quotient bifinite} if $A/B$ has finite projective dimension as a $B$-bimodule.  We prove 

\begin{theorem} \label{theorem main}
Let $B \subseteq A$ be a quotient bifinite extension. If $\mathrm{fin.dim}A < \infty$, then $\mathrm{fin.dim}B< \infty$.
\end{theorem}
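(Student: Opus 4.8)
The goal is to bound the projective dimension of a finitely generated left $B$-module $M$ with $\mathrm{pd}_B M < \infty$, in terms of $\mathrm{fin.dim}A$ and $n := \mathrm{pd}_{B^e} A/B$. The natural first move is to pass from $B$-modules to $A$-modules via induction $A\otimes_B -$. So the plan is: given such an $M$, consider $A\otimes_B M$, relate its projective dimension over $A$ to something controlled by $\mathrm{fin.dim}A$, and then relate $\mathrm{pd}_B M$ back down to $\mathrm{pd}_A(A\otimes_B M)$ plus a correction term coming from $A/B$.

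**Key steps.**
First I would record the short exact sequence of $B$-bimodules $0\to B\to A\to A/B\to 0$ and tensor it (over $B$, on the left) with $M$, yielding a sequence relating $M$, $A\otimes_B M$, and $(A/B)\otimes_B M$ together with $\mathrm{Tor}^B$ terms. The hypothesis $\mathrm{pd}_{B^e}(A/B)=n<\infty$ forces $\mathrm{Tor}^B_i(A/B, M)=0$ for $i>n$ and, more usefully, bounds the projective dimension of $(A/B)\otimes_B M$ and of the higher Tor modules as left $B$-modules independently of $M$ — because a finite $B^e$-projective resolution of $A/B$, tensored with any $M$, is a complex of (finitely generated) projective left $B$-modules whose homology is exactly $\mathrm{Tor}^B_*(A/B,M)$.
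Second, I would show $\mathrm{pd}_A(A\otimes_B M)<\infty$: since $\mathrm{pd}_B M<\infty$ and $A$ has finite projective dimension issues only through the bimodule $A$, one uses that a projective resolution of $M$ over $B$ induces, after applying $A\otimes_B-$, a resolution of $A\otimes_B M$ by modules of the form $A\otimes_B P$; each $A\otimes_B P$ has finite projective dimension over $A$ if $\mathrm{pd}_{B^e}(A/B)<\infty$ (equivalently $A$ has finite $B$-relative global-type control), so $\mathrm{pd}_A(A\otimes_B M)<\infty$ and hence is $\le \mathrm{fin.dim}A$.
Third — the crux — I would bound $\mathrm{pd}_B M$ by $\mathrm{pd}_A(A\otimes_B M)$ plus a constant depending only on $n$. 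The mechanism: from $0\to B\to A\to A/B\to 0$ applied to a high syzygy $\Omega^k_B M$, once $k>n$ the module $(A/B)\otimes_B \Omega^k_B M$ and all relevant Tor terms have uniformly bounded projective dimension, so if $A\otimes_B\Omega^k_B M$ is $B$-projective (which happens once $k\ge \mathrm{fin.dim}A$, using restriction of an $A$-projective resolution and the fact that $A$ is projective over $B$... — here one needs $A$ to be, say, projective as a right $B$-module, OR one argues at the level of the long exact sequence) then $\Omega^k_B M$ is a summand-up-to-bounded-dimension of something projective, forcing $\mathrm{pd}_B M \le k + (\text{const}(n))$.

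**Main obstacle.** The delicate point is the third step: quotient bifiniteness gives us $\mathrm{pd}_{B^e}(A/B)<\infty$ but \emph{not} that $A$ is one-sided projective over $B$, which is exactly the extra hypothesis $(ii)$ that \cite[Theorem 6.14]{iusenko2021} assumed and that we want to drop. So restricting an $A$-projective resolution of $A\otimes_B M$ does not immediately give a $B$-projective resolution. The fix I expect to need is to work entirely with the bimodule $A/B$: resolve it finitely over $B^e$, and observe that applying $-\otimes_B M$ (or $\mathrm{RHom}$/$\mathrm{Tor}$ machinery) transfers a \emph{bounded} amount of homological complexity, so that the ``defect'' between $M$ and $A\otimes_B M$ as left $B$-modules is controlled by $n$ alone. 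Concretely, one shows by dimension-shifting along $0\to B\to A\to A/B\to 0$ that $\mathrm{pd}_B M \le \max\{\mathrm{pd}_B(A\otimes_B M),\ n + \text{something}\}$, and then separately that $\mathrm{pd}_B(A\otimes_B M)\le \mathrm{pd}_A(A\otimes_B M) + \mathrm{pd}_B A \le \mathrm{fin.dim}A + n$ — using that $\mathrm{pd}_B A = \mathrm{pd}_B(A/B) \le \mathrm{pd}_{B^e}(A/B) = n$ since restriction from $B^e$-modules to left $B$-modules does not increase projective dimension. Assembling these gives $\mathrm{fin.dim}B \le \mathrm{fin.dim}A + 2n + c$ for an absolute constant $c$, and the theorem follows. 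I would double-check the exact constant in the write-up; the conceptual content is that \emph{all} correction terms are bounded purely by the single number $n=\mathrm{pd}_{B^e}(A/B)$.
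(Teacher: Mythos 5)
Your proposal is correct and follows essentially the same route as the paper: replace $M$ by a syzygy $\Omega^k_B(M)$ with $k\geqslant \mathrm{pd}(A/B_B)$ so that the relevant $\mathrm{Tor}$ groups vanish, tensor $0\to B\to A\to A/B\to 0$ with it, bound $\mathrm{pd}(_B\, A\otimes_B\Omega^k_B(M))$ by $\mathrm{fin.dim}A+\mathrm{pd}(_BA)$ via restriction, bound $\mathrm{pd}(_B\, (A/B)\otimes_B\Omega^k_B(M))$ by $n$ using the finite bimodule resolution, and assemble to get $\mathrm{fin.dim}B\leqslant \mathrm{fin.dim}A+2n$. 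The only point to tighten in a write-up is your second step: $A\otimes_B-$ applied to a projective resolution of $M$ itself need not be exact; one must apply it to $\Omega^k_B(M)$ for $k\geqslant\mathrm{pd}(A/B_B)$ (as you in fact do in the crux step, and as the paper does in its Lemma 2.4).
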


We then give some simple examples which show that there are quotient bifinite extensions for which $A$ clearly has finite finitistic dimension, while the subalgebra $B$ is more complex.

\medskip

\textbf{Acknowledgements.} We thank the referee for their careful reading of the text and helpful comments, which have improved the exposition. The first author was partially supported by CNPq Universal Grant 402934/2021-0, CNPq Produtividade 1D grant 303667/2022-2, and FAPEMIG Universal Grant APQ-00971-22.  The second author was supported by a CAPES doctoral grant -- Finance Code 001.

% ---------------------------------------------------------------------------- %
% generalizations
%\section{Correspondence Between Coalgebras and Cotensor Coalgebras}
\section{Preliminaries}
%In this section, we recall some definitions and basic results required in the paper.

We fix some notation.  Throughout our discussion, $A$ is a finite dimensional $k$-algebra for $k$ a field, and $B$ is a unital subalgebra of $A$. %``algebra'' will mean unital Artin $k$-algebra, where $k$ is a field      
%We consider the category $A$-$\mathrm{mod}$ of finitely generated left $A$-modules and $A$-module homomorphisms. 
Given a left $A$-module $M$, we denote by $\mathrm{pd}(_AM)$ the projective dimension of $M$, and by $\Omega_A^n(M)$ the $n$-th syzygy of $M$ $($setting $\Omega_A^0(M) = M )$.

The \textbf{finitistic dimension} of the algebra $A$ is
$$
\mathrm{fin.dim}A := \mathrm{sup}\{ \mathrm{pd}(_AM): M \hbox{ a finitely generated left $A$-module with }  \mathrm{pd}(_AM) < \infty \}
$$
and the \textbf{global dimension} of $A$ is
$$
\mathrm{gl.dim}A := \mathrm{sup}\{ \mathrm{pd}(_AM): M \hbox{ a finitely generated left $A$-module}\}.
$$

\begin{lemma}\label{lemma tensor sequence is exact}
Let
$$
0 \rightarrow P_n \xrightarrow{\varphi_n} P_{n-1} \xrightarrow{\varphi_{n-1}} \cdots \rightarrow P_0 \xrightarrow{\varphi_0} N \rightarrow 0
$$
be a projective resolution of the left $A$-module $N$. If $M$ is a right $A$-module such that $\mathrm{Tor}^A_j(M,N)=0$ for all $j \geqslant 1$, then the sequence
$$
 0 \rightarrow  M \otimes P_n \rightarrow M \otimes P_{n-1}  \rightarrow \cdots \rightarrow M \otimes  P_0  \rightarrow M \otimes N \rightarrow 0
$$
is exact. 
\end{lemma}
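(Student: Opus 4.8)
The plan is to split the given finite projective resolution into short exact sequences via its syzygies, to propagate the vanishing of $\mathrm{Tor}^A(M,-)$ along these syzygies by dimension shifting, and then to splice the tensored short exact sequences back together.

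First I would set $K_0 := N$ and, for $1 \leqslant i \leqslant n$, put $K_i := \mathrm{im}\,\varphi_i = \ker\varphi_{i-1}$, so that for each $0 \leqslant i \leqslant n-1$ there is a short exact sequence of left $A$-modules
$$
0 \longrightarrow K_{i+1} \longrightarrow P_i \xrightarrow{\ \varphi_i\ } K_i \longrightarrow 0 ,
$$
the map $P_i \to K_i$ being $\varphi_i$ with codomain cut down to its image. Since $\varphi_n$ is injective, it restricts to an isomorphism $P_n \xrightarrow{\sim} K_n$; this is the only point at which the finite length of the resolution is used. Applying $M \otimes_A -$ and reading off the long exact sequence in $\mathrm{Tor}^A(M,-)$ attached to the $i$-th short exact sequence, the vanishing of $\mathrm{Tor}^A_j(M,P_i)$ for $j \geqslant 1$ (as $P_i$ is projective) yields isomorphisms $\mathrm{Tor}^A_j(M,K_{i+1}) \cong \mathrm{Tor}^A_{j+1}(M,K_i)$ for all $j \geqslant 1$. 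Starting from $\mathrm{Tor}^A_j(M,K_0) = \mathrm{Tor}^A_j(M,N) = 0$ for $j \geqslant 1$, an induction on $i$ gives $\mathrm{Tor}^A_j(M,K_i) = 0$ for all $j \geqslant 1$ and all $0 \leqslant i \leqslant n$.

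In particular $\mathrm{Tor}^A_1(M,K_i) = 0$ for each $i$, so the long exact sequence degenerates and tensoring each short exact sequence above with $M$ produces a short exact sequence
$$
0 \longrightarrow M \otimes K_{i+1} \longrightarrow M \otimes P_i \longrightarrow M \otimes K_i \longrightarrow 0 .
$$
Splicing these $n$ sequences together --- identifying $M \otimes K_0$ with $M \otimes N$ and $M \otimes K_n$ with $M \otimes P_n$, and observing that each connecting composite $M \otimes P_i \twoheadrightarrow M \otimes K_i \hookrightarrow M \otimes P_{i-1}$ is exactly the map induced by $\varphi_i$ --- produces the asserted exact sequence, the injectivity of $M \otimes P_n \to M \otimes P_{n-1}$ coming from the isomorphism $M \otimes P_n \cong M \otimes K_n$ together with the leftmost of the short exact sequences.

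I do not expect a genuine obstacle here: the whole argument is routine dimension shifting together with the standard fact that spliced short exact sequences form a long exact sequence. The only things to be careful about are that finiteness of the resolution is precisely what makes the left-hand map injective, and the purely mechanical verification that the splicing reproduces the complex $M \otimes P_\bullet \to M \otimes N$ and not some variant. An even shorter route, if one prefers, is to remark directly that the $j$-th homology of the deleted complex $0 \to M\otimes P_n \to \cdots \to M\otimes P_0 \to 0$ equals $\mathrm{Tor}^A_j(M,N)$, since $P_\bullet$ is a projective resolution of $N$, hence vanishes for $j \geqslant 1$, while right exactness of $M \otimes_A -$ identifies its degree-zero homology with $M \otimes N$; this is the claim.
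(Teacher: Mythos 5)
Your proof is correct. The paper's own proof consists of the single remark that the statement ``follows easily from the definition of $\mathrm{Tor}_j^A(M,N)$'' --- that is, precisely the ``even shorter route'' you note at the end (the $j$-th homology of the deleted complex $M\otimes P_\bullet$ \emph{is} $\mathrm{Tor}_j^A(M,N)$, so its vanishing for $j\geqslant 1$ together with the right exactness identifying $H_0$ with $M\otimes N$ gives exactness everywhere) --- and your main dimension-shifting argument via the syzygies $K_i$ is a correct, more explicit elaboration of the same standard fact.
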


\begin{proof}
This follows easily from the definition of $\tn{Tor}_j^A(M,N)$.
\end{proof}

The following is well-known (cf.\ \cite[Proposition 4.7]{assem2006}):

\begin{lemma}\label{lemma pd bounds}
Let $0 \rightarrow L \rightarrow M \rightarrow N \rightarrow 0$ be a short exact sequence of $A$-modules. Then
\begin{itemize}
    \item [$(i)$] $\mathrm{pd}(_AN) \leqslant \mathrm{sup} \bigl\{ \mathrm{pd}(_AL) +1, \mathrm{pd}(_AM)  \bigr\}$;
    \item [$(ii)$]$\mathrm{pd}(_AM) \leqslant \mathrm{sup} \bigl\{ \mathrm{pd}(_AL), \mathrm{pd}(_AN)  \bigr\}$;
    \item [$(iii)$] $\mathrm{pd}(_AL) \leqslant \mathrm{sup} \bigl\{ \mathrm{pd}(_AM), \mathrm{pd}(_AN) -1  \bigr\}$.
\end{itemize}
\end{lemma}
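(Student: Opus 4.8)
The plan is to derive all three inequalities uniformly from the long exact $\mathrm{Ext}$-sequence attached to $0 \to L \to M \to N \to 0$, combined with the standard description of projective dimension, $\mathrm{pd}(_AX) = \sup\{\, n \geqslant 0 : \mathrm{Ext}^n_A(X,Y) \neq 0 \text{ for some } A\text{-module } Y \,\}$ (read as $-\infty$ when $X=0$). When the right-hand side of the inequality to be proved is infinite there is nothing to do, so in each case I assume it is a finite integer.

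Fixing an $A$-module $Y$ and applying $\mathrm{Hom}_A(-,Y)$ to the short exact sequence yields the exact sequence
$$\cdots \longrightarrow \mathrm{Ext}^{n-1}_A(L,Y) \longrightarrow \mathrm{Ext}^{n}_A(N,Y) \longrightarrow \mathrm{Ext}^{n}_A(M,Y) \longrightarrow \mathrm{Ext}^{n}_A(L,Y) \longrightarrow \mathrm{Ext}^{n+1}_A(N,Y) \longrightarrow \cdots .$$
Exactness says that, for every $n$ and $Y$, any one of $\mathrm{Ext}^{n}_A(M,Y)$, $\mathrm{Ext}^{n}_A(N,Y)$, $\mathrm{Ext}^{n}_A(L,Y)$ vanishes as soon as its two neighbours in this sequence do. Feeding in degrees beyond the relevant projective dimensions then closes each case. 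For $(ii)$: if $n > \sup\{\mathrm{pd}(_AL),\mathrm{pd}(_AN)\}$, the neighbours $\mathrm{Ext}^{n}_A(L,Y)$ and $\mathrm{Ext}^{n}_A(N,Y)$ of $\mathrm{Ext}^{n}_A(M,Y)$ vanish for all $Y$, forcing $\mathrm{Ext}^{n}_A(M,Y)=0$ for all $Y$. For $(i)$: if $n > \sup\{\mathrm{pd}(_AL)+1,\mathrm{pd}(_AM)\}$ then $n-1>\mathrm{pd}(_AL)$ and $n>\mathrm{pd}(_AM)$, so the neighbours $\mathrm{Ext}^{n-1}_A(L,Y)$ and $\mathrm{Ext}^{n}_A(M,Y)$ of $\mathrm{Ext}^{n}_A(N,Y)$ vanish for all $Y$. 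For $(iii)$: if $n > \sup\{\mathrm{pd}(_AM),\mathrm{pd}(_AN)-1\}$ then $n>\mathrm{pd}(_AM)$ and $n+1>\mathrm{pd}(_AN)$, so the neighbours $\mathrm{Ext}^{n}_A(M,Y)$ and $\mathrm{Ext}^{n+1}_A(N,Y)$ of $\mathrm{Ext}^{n}_A(L,Y)$ vanish for all $Y$. In every case one concludes that the relevant $\mathrm{Ext}^{n}_A(-,Y)$ vanishes for all $Y$ once $n$ exceeds the claimed bound, which is precisely the asserted inequality on projective dimensions.

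I expect no real obstacle here: once the $\mathrm{Ext}$-characterisation of projective dimension is granted, the argument is a short diagram chase, and the only points that warrant a word are the degenerate cases (a module being zero, or a projective dimension being infinite), which the conventions above absorb, together with the routine bookkeeping of the index shifts. For completeness I would remark that $(ii)$ alternatively drops straight out of the Horseshoe Lemma, which assembles projective resolutions of $L$ and $N$ into one of $M$ whose $n$-th term is the direct sum of the two.
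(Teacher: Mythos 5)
Your proof is correct. The paper gives no argument of its own for this lemma, merely citing it as well known (Assem--Simson--Skowro\'nski, Proposition~4.7 of the appendix), and your derivation of all three bounds from the long exact $\mathrm{Ext}$-sequence together with the $\mathrm{Ext}$-vanishing characterisation of projective dimension is exactly the standard proof that the citation points to; the index bookkeeping in each of the three cases checks out.
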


% OBS : Following statement had a typo at time of submission: should be pd A_B = n, not pd as a left B-module, this has been corrected in the text.
% I think this statement would be clearer if we wrote 
% pd (A \otimes Omega^m(M)) <= pd(Omega^m(M)) <= pd(M).
\begin{lemma}\label{lemma pd of A tensor Omega m}
If $\mathrm{pd}(A_B) = n < \infty$ then, for any left $B$-module $M$ and  $m \geqslant n, $ 
$$\mathrm{pd}\bigl(_A A \otimes_B \Omega_B^m(M)\bigr)\leqslant \mathrm{pd}(_BM).$$ 
\end{lemma}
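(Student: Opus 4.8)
The plan is to tensor a projective resolution of the syzygy $\Omega_B^m(M)$ with $A$ and to show that it remains exact, which is precisely the situation handled by Lemma~\ref{lemma tensor sequence is exact}. If $\mathrm{pd}(_BM)=\infty$ there is nothing to prove, so set $d:=\mathrm{pd}(_BM)<\infty$ and fix a projective resolution $0\to P_d\to\cdots\to P_0\to M\to 0$ of $M$ as a left $B$-module. Truncating it yields a projective resolution
\[
0\to P_d\to\cdots\to P_{m+1}\to P_m\to \Omega_B^m(M)\to 0
\]
of $\Omega_B^m(M)$, of length $d-m$ (here we use the convention $P_i=0$ for $i>d$, so that when $m\ge d$ the syzygy is projective or zero and the asserted inequality is immediate; assume henceforth $n\le m<d$).

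The key point is that $\mathrm{Tor}_j^B(A,\Omega_B^m(M))=0$ for every $j\ge 1$. Indeed, dimension shifting along the resolution of $M$ gives $\mathrm{Tor}_j^B(A,\Omega_B^m(M))\cong\mathrm{Tor}_{m+j}^B(A,M)$ for all $j\ge 1$; and since $\mathrm{pd}(A_B)=n$ the functor $\mathrm{Tor}_i^B(A,-)$ vanishes for every $i>n$, while $m+j\ge n+1>n$ because $m\ge n$ and $j\ge 1$. This is exactly the step where the hypothesis $m\ge n$ is used.

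Granting this vanishing, Lemma~\ref{lemma tensor sequence is exact} --- applied with $B$ in the role of the algebra and with $A$, viewed as a right $B$-module, in the role of $M$ --- shows that
\[
0\to A\otimes_B P_d\to\cdots\to A\otimes_B P_m\to A\otimes_B \Omega_B^m(M)\to 0
\]
is exact. Each $P_i$ is a direct summand of a free left $B$-module, hence $A\otimes_B P_i$ is a direct summand of a free left $A$-module and so is a projective left $A$-module. Thus the displayed sequence is a projective resolution of the left $A$-module $A\otimes_B\Omega_B^m(M)$ of length $d-m$, whence $\mathrm{pd}\bigl(_A A\otimes_B\Omega_B^m(M)\bigr)\le d-m\le d=\mathrm{pd}(_BM)$, as required. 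I do not expect a genuine obstacle here: the only things to watch are that $M$ and its syzygies need not be finitely generated (harmless, since ``projective'' still means ``direct summand of a free module''), and keeping the indices straight in the dimension-shifting step so that the bound $m\ge n$ is precisely what kills the relevant $\mathrm{Tor}$ groups.
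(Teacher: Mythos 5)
Your proposal is correct and follows essentially the same route as the paper: both reduce to the vanishing $\mathrm{Tor}_j^B(A,\Omega_B^m(M))\cong\mathrm{Tor}_{m+j}^B(A,M)=0$ for $j\geqslant 1$ (using $m\geqslant n=\mathrm{pd}(A_B)$), invoke Lemma \ref{lemma tensor sequence is exact} to see that tensoring a finite projective resolution of $\Omega_B^m(M)$ with $A$ stays exact, and observe that the resulting terms are projective left $A$-modules. The only cosmetic difference is that you obtain the resolution of the syzygy by truncating a resolution of $M$, whereas the paper takes an arbitrary finite projective resolution of $\Omega_B^m(M)$ directly; the bound is the same.
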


\begin{proof}
The result is trivial if $\mathrm{pd}(_BM)$ is infinite, so assume it is finite.  Let 
$$
0 \rightarrow P_s \xrightarrow{\varphi_s} P_{s-1} \xrightarrow{\varphi_{s-1}} \cdots \rightarrow P_0 \xrightarrow{\varphi_0} \Omega_B^m(M) \rightarrow 0
$$
be a projective resolution of the $B$-module $\Omega_B^m(M)$. Since %$\mathrm{pd}(_BA) = n $ and 
$m \geqslant n$, we have 
$$\mathrm{Tor}^B_j(A,\Omega^m_B(M)) = \mathrm{Tor}^B_{j+m}(A,M) = 0$$
for all $j \geqslant 1$. Therefore, by Lemma \ref{lemma tensor sequence is exact}, we have the following exact sequence
$$
0 \rightarrow A \otimes_B P_s \rightarrow A \otimes_B P_{s-1} \rightarrow \cdots \rightarrow A \otimes_B P_{0} \rightarrow A \otimes_B \Omega^m_B(M) \rightarrow 0.
$$
The modules $A\otimes_B P_i$ are projective $A$-modules, because the $P_i$ are projective $B$-modules. Hence
the sequence obtained is a projective resolution of $A \otimes_B \Omega^m_B(M)$.  Thus 
$$\mathrm{pd}\bigl(_A A \otimes_B \Omega^m_B(M) \bigl) \leqslant \mathrm{pd}\bigl(_B \Omega^m_B(M) \bigr) \leqslant \mathrm{pd}(_B M).$$
\end{proof}

\begin{lemma}\label{lemma bound pd of restriction}
% OBS: I think the first sentence here is unnecessary, delete it?  If we keep it we should right "left" twice
If $\mathrm{pd}(_BA) < \infty$, then for any $A$-module $X$ with $ \mathrm{pd}(_A X)< \infty$ we have
$$
\mathrm{pd}(_B X) \leqslant \mathrm{pd}(_A X) + \mathrm{pd}(_B A).
$$
%for all $A$-module $X$ with $ \mathrm{pd}(_A X)< \infty$.
\end{lemma}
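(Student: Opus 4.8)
The plan is to argue by induction on $d := \mathrm{pd}(_AX)$. The engine of the induction is the observation that restriction to $B$ sends projective $A$-modules to $B$-modules of projective dimension at most $\mathrm{pd}(_BA)$: a projective $A$-module $P$ is a direct summand of a free module $A^{(I)}$, so $_BP$ is a direct summand of $(_BA)^{(I)}$, and since projective dimension is additive over arbitrary direct sums and does not increase on passing to direct summands, $\mathrm{pd}(_BP) \leqslant \mathrm{pd}(_BA)$. This already settles the case $d = 0$, and the degenerate case $X = 0$ is trivial; note also that the hypothesis $\mathrm{pd}(_BA) < \infty$ makes the statement vacuous unless $\mathrm{pd}(_BA)$ is finite, so there is nothing further to assume.

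For the inductive step I would take $d \geqslant 1$ and assume the bound for all $A$-modules of projective dimension $< d$. Choosing a short exact sequence $0 \to \Omega_A^1(X) \to P \to X \to 0$ of $A$-modules with $P$ projective over $A$, one has $\mathrm{pd}(_A \Omega_A^1(X)) = d - 1$, so the inductive hypothesis gives $\mathrm{pd}(_B \Omega_A^1(X)) \leqslant (d-1) + \mathrm{pd}(_BA)$, while $\mathrm{pd}(_BP) \leqslant \mathrm{pd}(_BA)$ by the observation above. Regarding the sequence as one of $B$-modules and feeding it into Lemma \ref{lemma pd bounds}$(i)$ yields
$$\mathrm{pd}(_BX) \leqslant \sup\bigl\{ \mathrm{pd}(_B\Omega_A^1(X)) + 1,\ \mathrm{pd}(_BP) \bigr\} \leqslant \sup\bigl\{ d + \mathrm{pd}(_BA),\ \mathrm{pd}(_BA) \bigr\} = d + \mathrm{pd}(_BA),$$
which is precisely the asserted inequality.

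I do not expect a genuine obstacle here. The only point that needs a little care is the base case, where one must ensure that restricting a possibly infinitely generated free $A$-module to $B$ does not inflate the projective dimension — this is exactly what additivity of $\mathrm{pd}$ over direct sums provides. After that, the argument is just a single syzygy step over $A$ combined with part $(i)$ of Lemma \ref{lemma pd bounds}.
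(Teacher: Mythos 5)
Your proof is correct and rests on the same two ingredients as the paper's: the observation that a projective $A$-module restricts to a $B$-module of projective dimension at most $\mathrm{pd}(_BA)$, followed by a d\'evissage along an $A$-projective resolution of $X$. The only difference is that you carry out the d\'evissage by an explicit induction using Lemma \ref{lemma pd bounds}$(i)$, whereas the paper delegates that step to a citation; your version is self-contained and equally valid.
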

\begin{proof}
The projective dimension of a projective $A$-module, viewed as a $B$-module, is finite, because $\mathrm{pd}(_BA) < \infty$.
%The projective dimension as a $B$-module of a projective $A$-module $P$ is finite because $P$ is a direct summand of $A$ and  $\mathrm{pd}(_BA) < \infty$. 
Let 
$$
0 \rightarrow P_s \rightarrow P_{s-1} \rightarrow \cdots \rightarrow P_0 \rightarrow X \rightarrow 0
$$
be a minimal projective resolution of ${}_AX$. We have by \cite[Lemma 2.3]{xi2006} that
$$
\mathrm{pd}(_BX) \leqslant s + \mathrm{sup}\{ \mathrm{pd}(_BP_i): i \in \{0,\ldots,s\} \} \leqslant \mathrm{pd}(_AX) + \mathrm{pd}(_BA).
$$
\end{proof}

\begin{lemma}[{\cite[Lemma 6.3]{iusenko2021}}]\label{lemma induced module projective}
If $P$ is a projective $B$-bimodule and $X$ is any left $B$-module, then $P\otimes_BX$ is projective as a left $B$-module.
\end{lemma}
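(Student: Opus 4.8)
The plan is to reduce to the free bimodule and then compute directly. First I would use that, by definition, a projective $B$-bimodule $P$ is a direct summand of a free $B$-bimodule, that is, of a direct sum of copies of $B\otimes_k B$ --- the rank-one free module over the enveloping algebra $B\otimes_k B^{\mathrm{op}}$, viewed as a $B$-bimodule via $b\cdot(x\otimes y)\cdot b'=bx\otimes yb'$. Since the functor $-\otimes_B X$ is additive, commutes with arbitrary direct sums, and carries split monomorphisms to split monomorphisms, and since a direct summand of a projective left $B$-module is again projective, it is enough to treat the case $P=B\otimes_k B$.

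Next I would make the identification $(B\otimes_k B)\otimes_B X\cong B\otimes_k X$ of left $B$-modules explicit. The right $B$-action on $B\otimes_k B$ relevant to the tensor product over $B$ is the one on the second tensor factor, so the assignment $(x\otimes y)\otimes v\mapsto x\otimes(yv)$ is well defined and furnishes an isomorphism $(B\otimes_k B)\otimes_B X\cong B\otimes_k(B\otimes_B X)\cong B\otimes_k X$, where the left $B$-action is, throughout, the one on the left-hand factor $B$; here I use the natural isomorphism $B\otimes_B X\cong X$.

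Finally, I would observe that $B\otimes_k X$ is a free left $B$-module: fixing a $k$-basis $\{x_i\}_{i\in I}$ of $X$ gives $B\otimes_k X=\bigoplus_{i\in I}(B\otimes_k x_i)\cong B^{(I)}$ as left $B$-modules, which is free and in particular projective. Combining this with the reduction of the first paragraph completes the proof.

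The only delicate point is the bookkeeping of the several module actions carried by $B\otimes_k B$: one must make sure that the right action neutralised by $-\otimes_B X$ is the inner action on the second tensor factor, so that the surviving left $B$-module structure is genuinely the free one on the first factor and is not accidentally conflated with an inner action. Beyond this care, the statement is essentially formal once the reduction to the free bimodule is in place, so I do not expect any real obstacle.
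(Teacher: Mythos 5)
Your proof is correct, and the reduction to the free bimodule $B\otimes_k B$ followed by the identification $(B\otimes_k B)\otimes_B X\cong B\otimes_k X$ (free on a $k$-basis of $X$) is exactly the standard argument; the paper itself gives no proof, simply citing \cite[Lemma 6.3]{iusenko2021}, and your careful tracking of which $B$-action is consumed by $-\otimes_B X$ addresses the only subtle point.
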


% ---------------------------------------------------------------------------- %
% Peirce decomposition and Radford adjunction
\section{Quotient bifinite extensions}

% In this section, we are working with extension $B \subseteq A$ such that the projective dimension of  $A/B$ as a $B$-bimodule is finite.  We will show that the finitistic dimension of $B$ is finite whenever the finitistic dimension of $A$ is finite. 
%Certain arguments in this section are similar to arguments from \cite{iusenko2021}. 

\begin{defn}
The extension of algebras $B \subseteq A$ is \textbf{quotient bifinite} if the projective dimension of $A/B$ as a $B$-bimodule is finite.
\end{defn}

\begin{remark}\label{remark pd bounds} 
If $B \subseteq A$ is quotient bifinite, then the projective dimension of $A/B$ as a right and as a left $B$-module is bounded above by the projective dimension of $A/B$ as a $B$-bimodule. The short exact sequence
$$
0 \rightarrow B \hookrightarrow A \rightarrow A/B \rightarrow 0
$$
and Lemma \ref{lemma pd bounds} show that  $\textrm{pd}(_BA)\leqslant \textrm{pd}(_BA/B)$ and $\textrm{pd}(A_B)\leqslant \textrm{pd}(A/B_B)$.
\end{remark}

\begin{proof}(of Theorem \ref{theorem main})
Denote by $n_r$ the projective dimension of $A/B$ as a right $B$-module and by $n_b$ the projective dimension of $A/B$ as a $B$-bimodule. Let $M$ be a left $B$-module with finite projective dimension. The sequence of right $B$-modules 
$$
0 \rightarrow B \hookrightarrow A \rightarrow A/B \rightarrow 0
$$ 
induces the exact sequence
$$
%\rightarrow \mathrm{Tor}^B_1(A,\Omega^{n_r}_B(M)) 
%\rightarrow 
\mathrm{Tor}^B_1(A/B,\Omega^{n_r}_B(M))
\rightarrow 
\Omega^{n_r}_B(M) \rightarrow A \otimes_B \Omega^{n_r}_B(M) \rightarrow (A/B)\otimes_B \Omega^{n_r}_B(M) \rightarrow 0.
$$
But the projective dimension of $A/B$ as a right $B$-module is $n_r$, so
$$\mathrm{Tor}^B_1(A/B,\Omega^{n_r}_B(M)) = \mathrm{Tor}^B_{n_r+1}(A/B,M) = 0$$ 
%and $\mathrm{Tor}^B_1(A,\Omega^{n_r}_B(M)) = \mathrm{Tor}^B_{n_r+1}(A,M) = 0$ , 
and hence we have an exact sequence
$$
0 \rightarrow  \Omega^{n_r}_B(M) \rightarrow A \otimes_B \Omega^{n_r}_B(M) \rightarrow (A/B)\otimes_B \Omega^{n_r}_B(M) \rightarrow 0.
$$
%is exact [[WE DIDN'T ACTUALLY NEED $\mathrm{Tor}^B_1(A,\Omega^{n_r}_B(M))=0$.  
%The $n$ in the above sequence should be $n_r$ right?]].   We will check that the projective dimension of the two terms on the right is limited above by a number independent of $M$, and hence by Lemma \ref{lemma pd bounds} so is $M$. 

Since $M$ has finite projective dimension, so does $A \otimes_B \Omega^{n_r}_B(M)$ as a left $A$-module by Lemma \ref{lemma pd of A tensor Omega m}. Hence $\mathrm{pd}\bigl(_A A \otimes_B \Omega^{n_r}_B(M) \bigr) \leqslant \mathrm{fin.dim}(A)$. It follows from Lemma \ref{lemma bound pd of restriction} that 
\begin{ceqn}
\begin{align}\label{5.1}
    \mathrm{pd}(_B A\otimes_B \Omega^{n_r}_B(M)) \leqslant  \mathrm{fin.dim}(A) + \mathrm{pd}(_BA).
\end{align}
\end{ceqn}

Let 
$$
0 \rightarrow Q_m \rightarrow \cdots \rightarrow Q_1 \rightarrow Q_0 \rightarrow A/B \rightarrow 0
$$
be a finite projective resolution of $A/B$ as a $B$-bimodule. Since the modules $Q_i$ are projective as right $B$-modules, it follows from (the left-right dual version of) Lemma \ref{lemma tensor sequence is exact} that the sequence
$$
0 \rightarrow Q_m \otimes_B \Omega^{n_r}_B(M)  \rightarrow \cdots \rightarrow Q_1 \otimes_B  \Omega^{n_r}_B(M)  \rightarrow Q_0 \otimes_B   \Omega^{n_r}_B(M)  \rightarrow (A/B)\otimes_B \Omega^{n_r}_B(M)  \rightarrow 0
$$
is exact. But the modules $Q_i \otimes_B  \Omega^{n_r}_B(M) $ are projective as left $B$-modules by Lemma \ref{lemma induced module projective}, and hence $(A/B)\otimes_B \Omega^{n_r}_B(M)$ has projective dimension as a left $B$-module not more than the projective dimension of $A/B$ as a $B$-bimodule, i.e. 
\begin{ceqn}
\begin{align}\label{5.2}
\mathrm{pd}(_B (A/B) \otimes_B \Omega^{n_r}_B(M)) \leqslant  n_b.
\end{align}
\end{ceqn}
As a result, we have 
$$
\begin{array}{cccc}
 \mathrm{pd}(_B M) & \leqslant & n_r + \mathrm{pd}(_B \Omega^{n_r}_B(M)) &  \\
 & \leqslant &  n_r + \mathrm{sup} \big\{ \mathrm{pd}(_B A \otimes_B \Omega^{n_r}_B(M)), \mathrm{pd}(_B (A/B)\otimes_B \Omega^{n_r}_B(M)) -1 \big\} & \hbox{Lemma }\ref{lemma pd bounds}  \\
 & \leqslant &  n_r +  \mathrm{sup}\big\{\mathrm{fin.dim}(A) + \mathrm{pd}(_BA), \mathrm{pd}(_B (A/B)\otimes_B \Omega^{n_r}_B(M)) -1 \big\} &  (\ref{5.1})  \\
 & \leqslant & n_r  + \mathrm{sup}\big\{\mathrm{fin.dim}(A) + \mathrm{pd}(_BA), n_b -1 \big\} &  \ (\ref{5.2}) \\
 & \leqslant & n_b  + \mathrm{sup}\big\{\mathrm{fin.dim}(A) + n_b, n_b -1 \big\} &   \hbox{Remark }\ref{remark pd bounds}   \\
 & = & 2n_b + \mathrm{fin.dim}(A). &  
 % I SWAPPED THE LAST INEQUALITY FOR AN EQUALITY!!!
\end{array}
$$
The number $2n_b + \mathrm{fin.dim}(A)$ is independent of $M$, so we are done.
\end{proof}

% As an immediate consequence of Theorem \ref{5}, we have the following result.

% \begin{corol}
% Let 
% $$
% B = A_0 \subseteq A_1 \subseteq ... \subseteq A_{m-1} \subseteq A_m = A
% $$
% be a chain of subalgebras of $A$ such that $A_{i} \subseteq A_{i+1}$ is a quotient bifinite extension for all $i=0,...,m-1$. If $\mathrm{fin.dim}A< \infty $, then $\mathrm{fin.dim}B < \infty$.
% \end{corol}

%Now let us point out that the Theorem \ref{5} gives another formulation of the finitistic dimension conjecture.

%\begin{corol}
%The following two statements are equivalent: 
%\begin{itemize}
 %   \item[$(i)$] the finitistic dimension of any finite-dimensional $k$-algebra is finite;
 %   \item[$(ii)$] for every finite dimensional algebra $B$, there is a weakly proj-bounded extension $B \subseteq A$ such that $\mathrm{fin.dim}A < \infty$.
%\end{itemize}
%\end{corol}
%\end{comment}

%\begin{comment}
%\begin{remark}
%If $B \subseteq A$ is a extension such that $\mathrm{gl.dim}A < \infty$ and $\mathrm{gl.dim}B = \infty$ then $\mathrm{pd}(_B A/B _B) = \infty$.
%\end{remark}
%\end{comment}

We present some examples.  The finitistic dimension of a finite dimensional monomial algebra is always finite by \cite{green91}, so we will give quotient bifinite extensions $B\subseteq A$ with $A$ monomial.  A result of Green and Marcos \cite[Theorem 6.1]{marco2017} says that to prove the finitistic dimension conjecture for finite dimensional algebras, it suffices to prove it for algebras whose Gabriel quiver has a directed path from any given vertex to any other, and so our examples will have this property.

\begin{example}
Let $A$ be the monomial algebra given by the following quiver with relations:
$$\begin{tikzcd}
	 & 2 \dar[dl, swap]{\alpha} &\\
	1\dar[dr, swap]{\delta}   & &  \dar[ul, swap]{\beta}  3  \\
	& 4 \dar[ur, swap]{\gamma} & 
\end{tikzcd}, \quad \delta \alpha \beta \gamma \delta  = 0.$$
% $$\xymatrix{ & 2\ar[ld]_{\alpha} && \\
% 1 \ar[dr]_{\delta} & & 3,\ar[ul]_{\beta} &  \delta \alpha \beta \gamma \delta  = 0.\\
% & 4\ar[ur]_{\gamma} && }$$
% $$
% \begin{tikzcd}
% \draw (-1,0) node {1};
% \draw (0,1) node {2};
% \draw (1,0) node {4};
% \draw (0,-1) node {3};

% \draw (-0.1,0.9) -- (-1,0) node[midway, above] {\alpha};
% \end{tikzcd}, \quad \delta \alpha \beta \gamma \delta  = 0.
% $$

% $$
% \begin{tikzcd}
% 1 \rar[rrr,swap, yshift =- 2.5 ex]{\delta} & \lar[swap]{\alpha} 2 & \lar[swap]{\beta} 3 & \lar[swap]{\gamma}4 
% \end{tikzcd}, \quad \delta \alpha \beta \gamma \delta  = 0.
% $$
The subalgebra $B$ of $A$ generated by the set $\{e_1,e_2,e_3,e_4, \alpha, \gamma, \delta, \varepsilon_1 = \beta \gamma, \varepsilon_2 = \alpha \beta \}$ has quiver with relations:
$$ \begin{tikzcd}
	 & 2 \dar[dl, swap]{\alpha} &\\
	1  \rar[rr]{\delta} & & \dar[dl]{\gamma} \dar[ul, swap]{\varepsilon_1 = \beta\gamma}  4  \\
	& 3 \dar[ul]{\varepsilon_2 = \alpha\beta} & 
\end{tikzcd} , \quad \alpha \varepsilon_1 - \varepsilon_2 \gamma = \delta \alpha \varepsilon_1 \delta =0.$$
% $$ \begin{tikzcd}
% 	 & 2 \dar[dl, swap]{\alpha} &\\
% 	1  \rar[rr]{\delta} & & \dar[dl]{\gamma} \dar[ul, swap]{\beta_1}  4  \\
% 	& 3 \dar[ul]{\beta_2} & 
% \end{tikzcd} , \quad \alpha \beta_1 - \beta_2 \gamma = \delta \alpha \beta_1 \delta =0.$$
Since $B$-bimodules of the form $X_{ij} = Be_i \otimes_k e_jB$ are projective, the sequence
$$
\begin{tikzcd}
0 \rar &  X_{14} \ar[dr] \rar[r]{\varphi_2}   & X_{13} \oplus X_{24}  \rar[r]{\varphi_1} \ar[dr]  &  X_{23}  \rar[r]{\varphi_0}  &  A/B\ar[r] & 0 \\
  &  &   \langle (e_1 \otimes \gamma, -\alpha \otimes e_4)  \rangle\ar[u, hook]     &   \langle \alpha \otimes e_3, e_2 \otimes \gamma  \rangle \ar[u, hook]  &         
\end{tikzcd}
$$
(where $\langle x\rangle$ indicates ``generated by $x$ as a $B$-bimodule'') is a projective resolution of the $B$-bimodule  $A/B$ (which has basis $\{\beta + B\}$), so the extension $B \subseteq A$ is quotient bifinite. Now since $A$ has finite finitistic dimension (being monomial) it follows from Theorem \ref{theorem main} that $\mathrm{fin.dim}B < \infty$.

\end{example}

\begin{example}
Let $A$ be the algebra given by the quiver with relations: 
$$
\begin{tikzcd}
   & 2 \dlar[swap]{\alpha_1} &  &    \\
  1 \rar[rdd,swap]{\eta}  &  & 4  \ular[swap]{\beta_1} \dlar[swap]{\beta_2} & 5 \lar[swap]{\delta}   \\
  & 3 \ular[swap]{\alpha_2} &  &  \\
   & 6 \dar[rruu,swap]{\varepsilon}  &  & 
\end{tikzcd}, \quad  \eta \alpha_1 \beta_1 \delta \varepsilon = \eta \alpha_2 \beta_2 \delta \varepsilon = \varepsilon \eta = 0.
$$
The subalgebra $B$ of $A$ generated by the set $$\bigl\{ e_1,e_2,e_3,e_4,e_5,e_6, \alpha_1,\alpha_2, \delta, \varepsilon, \eta, \theta_1 = \beta_1 \delta, \theta_2=\beta_2\delta, \omega=\alpha_1\beta_1 + \alpha_2 \beta_2 \bigr\},$$
has quiver with relations: 
$$
\begin{tikzcd}
   & 2 \dlar[swap]{\alpha_1} &  &    \\
  1 \rar[rdd,swap]{\eta}  &   4  \lar[swap]{\omega}  & 5 \lar[swap]{\delta} \dlar[swap]{\theta_2} \ular[swap]{\theta_1}   \\
  & 3 \ular[swap]{\alpha_2}  &  \\
   & 6 \dar[ruu,swap]{\varepsilon}   & 
\end{tikzcd}, \quad  \eta \alpha_1 \theta_1  \varepsilon = \eta \alpha_2 \theta_2  \varepsilon = \varepsilon \eta = \omega \delta - \alpha_1 \theta_1 - \alpha_2 \theta_2 = 0.
$$
The $B$-bimodule $A/B$ has basis
\[
\displaystyle  \bigl\{ \beta_1 + B, \beta_2 + B, \alpha_1 \beta_1 + B, \eta \alpha_1 \beta_1 + B \bigr\}. 
\]
The exact sequence
 %
%$$
%0 \rightarrow X_{1,5} \rightarrow X_{1,4}\oplus X_{2,5} \rightarrow X_{2,4} \rightarrow M_{2,4}
%$$
%\end{comment}
$$
\begin{tikzcd}
 & 0 \rar & X_{15}  \rar & X_{14} \oplus X_{25} \oplus X_{35}  \rar  &  X_{34} \oplus X_{24}  \rar &  A/B \rar & 0  \\
\end{tikzcd}
$$
is a projective resolution of $A/B$ as a $B$-bimodule and hence $B \subseteq A$ is a quotient bifinite extension. So $\mathrm{fin.dim} B < \infty$ by Theorem \ref{theorem main}.
\end{example}

Similar constructions yield quotient bifinite extensions wherein $A$ is monomial and $B$ has relations that are linear combinations of arbitrarily many monomials.  The following generalizes \cite[Theorem 4.2]{Cibils_2022}.

%[[SIMILAR CONSTRUCTIONS YIELD ALGEBRAS WITH RELATIONS WITH ARBITRARILY MANY TERMS??]]

\begin{prop}\label{prop gd preserved}
Let $B \subseteq A$ be a quotient bifinite extension. If $\mathrm{gl.dim}A < \infty$, then $\mathrm{gl.dim}B< \infty$.
\end{prop}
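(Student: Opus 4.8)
The plan is to run the proof of Theorem \ref{theorem main} essentially verbatim, observing that the only place the hypothesis ``$M$ has finite projective dimension'' was actually used was to invoke Lemma \ref{lemma pd of A tensor Omega m} in order to bound the projective dimension of $A\otimes_B\Omega^{n_r}_B(M)$ as an $A$-module by $\mathrm{fin.dim}A$. When $\mathrm{gl.dim}A<\infty$, every finitely generated $A$-module has projective dimension at most $\mathrm{gl.dim}A$, so this bound is automatic for an arbitrary finitely generated $M$, with $\mathrm{gl.dim}A$ playing the role of $\mathrm{fin.dim}A$. Thus the ``finite projective dimension'' restriction on $M$ becomes vacuous and we obtain a bound valid for all finitely generated left $B$-modules.

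Concretely, I would set $n_r=\mathrm{pd}(A/B_B)$ and let $n_b$ denote the projective dimension of $A/B$ as a $B$-bimodule, both finite by quotient bifiniteness (and by Remark \ref{remark pd bounds} we also have $\mathrm{pd}(_BA)<\infty$ and $n_r\leqslant n_b$, $\mathrm{pd}(_BA)\leqslant n_b$). Fix an arbitrary finitely generated left $B$-module $M$. Exactly as in the proof of Theorem \ref{theorem main}, since $\mathrm{Tor}^B_1(A/B,\Omega^{n_r}_B(M))=\mathrm{Tor}^B_{n_r+1}(A/B,M)=0$, the sequence $0\to B\hookrightarrow A\to A/B\to 0$ of right $B$-modules induces the short exact sequence of left $B$-modules
$$
0 \rightarrow \Omega^{n_r}_B(M) \rightarrow A\otimes_B\Omega^{n_r}_B(M) \rightarrow (A/B)\otimes_B\Omega^{n_r}_B(M) \rightarrow 0 .
$$
Now $A\otimes_B\Omega^{n_r}_B(M)$ is a finitely generated left $A$-module, so $\mathrm{pd}(_A A\otimes_B\Omega^{n_r}_B(M))\leqslant\mathrm{gl.dim}A<\infty$; combining this with $\mathrm{pd}(_BA)<\infty$ and Lemma \ref{lemma bound pd of restriction} gives $\mathrm{pd}(_B A\otimes_B\Omega^{n_r}_B(M))\leqslant\mathrm{gl.dim}A+\mathrm{pd}(_BA)$. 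As in the proof of Theorem \ref{theorem main}, tensoring a finite projective $B$-bimodule resolution of $A/B$ with $\Omega^{n_r}_B(M)$ and applying Lemmas \ref{lemma tensor sequence is exact} and \ref{lemma induced module projective} yields $\mathrm{pd}(_B(A/B)\otimes_B\Omega^{n_r}_B(M))\leqslant n_b$.

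Feeding these two estimates into Lemma \ref{lemma pd bounds}$(iii)$ applied to the displayed short exact sequence, and then using $\mathrm{pd}(_BM)\leqslant n_r+\mathrm{pd}(_B\Omega^{n_r}_B(M))$ together with Remark \ref{remark pd bounds}, I arrive at the same chain of inequalities as in the proof of Theorem \ref{theorem main}, with $\mathrm{fin.dim}A$ replaced by $\mathrm{gl.dim}A$:
$$
\mathrm{pd}(_BM) \leqslant 2n_b + \mathrm{gl.dim}A .
$$
Since this bound does not depend on $M$, it follows that $\mathrm{gl.dim}B\leqslant 2n_b+\mathrm{gl.dim}A<\infty$. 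I do not expect any genuine obstacle here: the only point requiring care is to check that each step of the proof of Theorem \ref{theorem main} survives the weakening of the hypothesis on $M$, the one substantive change being that Lemma \ref{lemma pd of A tensor Omega m} is replaced by the trivial observation that every finitely generated $A$-module has projective dimension at most $\mathrm{gl.dim}A$.
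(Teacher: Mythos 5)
Your proposal is correct and follows essentially the same route as the paper's own proof, which likewise reruns the argument of Theorem \ref{theorem main} with $\mathrm{gl.dim}A$ in place of $\mathrm{fin.dim}A$ and observes that the finiteness hypothesis on $\mathrm{pd}(_BM)$ (and hence the appeal to Lemma \ref{lemma pd of A tensor Omega m}) becomes unnecessary. The only cosmetic difference is that the paper works with $\Omega_B^{n_b}(M)$ rather than $\Omega_B^{n_r}(M)$, which changes nothing since $n_r\leqslant n_b$, and both arrive at the bound $\mathrm{gl.dim}B\leqslant 2n_b+\mathrm{gl.dim}A$.
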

\begin{proof}
The proof is essentially identical to that of Theorem \ref{theorem main}: denote by  $g_A$ the global dimension of $A$ and by $n_b$ the projective dimension of $A/B$ as a $B$-bimodule. Let $M$ be a left $B$-module. As in the proof of Theorem \ref{theorem main}, $A \otimes_B \Omega_B^{n_b}(M)$  has projective dimension not more than $\mathrm{pd}(_B A) + g_A$ by Lemma \ref{lemma bound pd of restriction}. As in the proof of Theorem \ref{theorem main}, the left $B$-module $(A/B)\otimes_B \Omega_B^{n_b}(M)$ has projective dimension not more than the projective dimension of $A/B$ as a $B$-bimodule.  Hence 
$$
\mathrm{pd}(_B M) \leqslant {n_b} + \mathrm{sup} \{ {n_b}, \mathrm{pd}(_B A) + g_A \},
$$
so the global dimension of $B$ is bounded above by $2{n_b} + g_A$.
\end{proof}

The extension $k\subseteq k[x]/\langle x^2\rangle$ shows that the converse of Proposition \ref{prop gd preserved} is false.

% \begin{remark}
% It is not true that if $B \subseteq A$ is a quotient bifinite extension and the global dimension of $B$ is finite, then the global dimension of $A$ is finite. The extension $k \subseteq k[X]/\langle X^2 \rangle$ is quotient bifinite, but $\mathrm{gl.dim}(k) = 0$ and $\mathrm{gl.dim}(k[X]/\langle X^2 \rangle) = \infty$.
% \end{remark}

\bibliography{mref}
\bibliographystyle{alpha}

\end{document}